 \newtheorem{thm}{Theorem}[section]
 \newtheorem{cor}[thm]{Corollary}
 \newtheorem{lem}[thm]{Lemma}
 \theoremstyle{definition}
 \newtheorem{defn}[thm]{Definition}
 \theoremstyle{remark}
 \newtheorem{rem}[thm]{Remark}
 \newtheorem*{ex}{Example}
 \numberwithin{equation}{section}
\title{Carnot-Carath\'eodory Metrics in \\ Unbounded Subdomains of $\mathbb{C}^2$}
\author{Aaron Peterson\footnote{This work was supported in part by NSF Grant No. 1147523 - RTG: Analysis and Applications at the University of Wisconsin - Madison. The author would like to thank Professor Alexander Nagel for his support throughout this project, and Professor Brian Street for helpful conversations.}}
\date{}
\begin{document}

\maketitle

\begin{abstract}
We introduce a new class of unbounded model subdomains of $\mathbb{C}^2$ for the $\Box_b$ problem. 
Unlike previous finite type models, these domains need not be bounded by algebraic varieties. 
In this paper we obtain precise global estimates for the Carnot-Carath\'eodory metric induced on the boundary of such domains by the real and imaginary parts of the CR vector field.
\end{abstract}

\section{Introduction}
Let $\Omega=\left\{(z_1,z_2)\ :\ \Im(z_2)>P(z_1)\right\}\subset \mathbb{C}^2$, where $P:\mathbb{C}\rightarrow \mathbb{R}$ is smooth, subharmonic, and non-harmonic. 
One fundamental problem for such domains, solved when $P$ is a polynomial by Nagel, Stein, and Wainger in \cite{NagelSteinWainger1985}, is to describe the Carnot-Carath\'eodory (CC) metric $d(p,q)$ induced on ${\rm b}\Omega$ by the real and imaginary parts of the tangential CR vector field.
In two dimensions this metric controls several analytical objects associated to $\Omega$, such as the Bergman kernel, the Szeg\H{o} kernel, and parametrices for the $\Box$ and $\Box_b$ operators; see \cite{NagelStein2006} and the references therein for examples.
When $P$ is a polynomial, these objects have been studied via scaling arguments; see in particular \cite{NagelRosaySteinWainger1989} for an example.
We are interested in studying these objects, and therefore the metric $d(p,q)$, in domains where the function $P$ is not a polynomial.

Following the notation of \cite{NagelSteinWainger1985}, we denote by $\Lambda(p,\delta)$ the diameter (in the $\Re(z_2)$-direction) of the CC ball on ${\rm b}\Omega$ with center $p=(p_1,p_2)$ and radius $\delta$.
The purpose of this paper is to give an elementary alternate description of the balls $B_d(p,\delta)$ associated to $d$ which does not require homogeneity of $P$.
This necessitates a description of $\Lambda(p,\delta)$, which we accomplish by introducing the notion of a \emph{$(p_1,\delta)$-stockyard}, a collection $R$ of subsets of $\mathbb{C}$ which satisfies some mild connectedness and regularity properties.

Our first main result is the following.

\begin{thm}\label{gschar}$\displaystyle\Lambda(p,\delta) = \genfrac{}{}{0pt}{1}{\sup}{(p_1,\delta)-stockyards\ R}\ \sum_{R_i\in R} \int_{R_i} \Delta P(w) dm(w).$
\end{thm}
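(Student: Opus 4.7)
The plan is to establish Theorem \ref{gschar} by matching upper and lower bounds, with Stokes' theorem as the bridge between the CC-geometry on ${\rm b}\Omega$ and the Levi mass $\Delta P\,dm$ on $\mathbb{C}$. The crucial observation is that the real 1-form $\eta := P_{y_1}\,dx_1 - P_{x_1}\,dy_1$ on $\mathbb{C}$ (equivalently, $\pm d^c P$) satisfies $d\eta = \Delta P\,dm$, and a direct computation shows that along any curve $\gamma=(\gamma_1,\gamma_2)$ on ${\rm b}\Omega$ horizontal with respect to the real and imaginary parts of the tangential CR field $\bar L$, one has $d(\Re z_2)|_\gamma = \eta(\dot\gamma_1)\,dt$, so
\[
\Re(\gamma_2(1)) - \Re(\gamma_2(0)) = \int_{\gamma_1}\eta.
\]
Moreover the horizontal vector fields $\Re\bar L$ and $\Im\bar L$ project onto a constant multiple of the standard Euclidean frame on the $z_1$-plane, so the CC-length of $\gamma$ is comparable to the Euclidean length of $\gamma_1$. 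Both sides of Theorem \ref{gschar} therefore express an extremal problem for planar curves of Euclidean length controlled by $\delta$.

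For the upper bound, I would pick points $q,q'\in B_d(p,\delta)$ nearly attaining the diameter in $\Re(z_2)$, connect them by a horizontal curve $\gamma$ of CC-length near $2\delta$, and use the identity above to write the $\Re(z_2)$-displacement as $\int_{\gamma_1}\eta$. A loop-erasure procedure on $\gamma_1$ replaces it by a disjoint union of simple Jordan loops $\partial R_1,\dots,\partial R_n$ joined by short arcs; Stokes' theorem gives
\[
\int_{\gamma_1}\eta = \sum_i \varepsilon_i\int_{R_i}\Delta P\,dm,
\]
with signs $\varepsilon_i\in\{\pm 1\}$ determined by orientation. Because $P$ is subharmonic, $\Delta P\ge 0$, so loops of the ``wrong'' orientation can be erased without decreasing the sum, and the residual collection $\{R_i\}$, together with the connecting arcs inherited from $\gamma_1$, must be verified to satisfy the connectedness and regularity axioms defining a $(p_1,\delta)$-stockyard.

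For the lower bound, starting from a $(p_1,\delta)$-stockyard $R=\{R_i\}$, I would use the connectivity data built into its definition to construct an explicit planar curve $\gamma_1$ of Euclidean length at most $\delta$ that begins at $p_1$ and traverses each $\partial R_i$ once with positive orientation. Lifting $\gamma_1$ horizontally to ${\rm b}\Omega$ produces two endpoints in $B_d(p,\delta)$ whose $\Re(z_2)$-separation equals $\sum_i\int_{R_i}\Delta P\,dm$ by the Stokes identity, giving the desired inequality.

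The main technical obstacle is that Theorem \ref{gschar} asserts an equality rather than two-sided comparability: the axioms of a $(p_1,\delta)$-stockyard must be fine-tuned to absorb precisely the losses incurred by the loop-erasure and arc-counting steps on the upper-bound side, while still providing exactly the connectivity data needed to realize the sum by an actual curve of length $\delta$ on the lower-bound side. I expect the bulk of the technical work to lie in establishing that these two calibrations match without an extraneous multiplicative constant, and in handling the non-compactness and non-algebraicity of the unbounded setting where the usual scaling arguments from \cite{NagelSteinWainger1985} and \cite{NagelRosaySteinWainger1989} are unavailable.
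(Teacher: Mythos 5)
Your proposal matches the paper's proof in its essential structure: both directions rest on Green's theorem applied to $\partial_y P\,dx-\partial_x P\,dy$, with the upper bound obtained by decomposing the self-intersecting closed projection into simple loops and reversing the positively-oriented ones (using $\Delta P\ge 0$), and the lower bound obtained by concatenating negatively-oriented parametrizations of the pen boundaries into a single closed curve of length at most $\delta$. Two details worth flagging: the paper makes your ``loop-erasure'' step concrete by first approximating the FPWS projection by a piecewise-linear closed curve with finitely many self-intersections and then invoking an Eulerian-graph cycle decomposition, which automatically produces edge-disjoint simple cycles sharing vertices and hence a connected union of boundaries (exactly the stockyard axioms); and since $\Lambda(p_0,\delta)$ is defined as a supremum over mean-zero controls $(\alpha,\beta)\in\mathscr{X}^\ast$, the projected curve is already a closed loop through $z_0$, so there is no need to locate near-extremal endpoints $q,q'$ and close up an arc between them.
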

\smallskip

We next introduce a class of smooth non-polynomial functions $P$ for which $\Lambda(p,\delta)\approx f(\delta)$ when $\delta\geq \delta_0$ is sufficiently large.
If such a function $f(\delta)$ exists, then $(f(\delta),\delta_0)$ is said to be a \emph{uniform global structure} (UGS) for ${\rm b}\Omega$.
The presence of a UGS imposes strong conditions on $\Delta P$, which in turn restricts the possibilities for $f(\delta)$.
In particular, we have our second main result.

\begin{thm}\label{ugschar} If ${\rm b}\Omega$ has a UGS $(f(\delta),\delta_0)$, then $\delta\lesssim f(\delta)\lesssim \delta^2$.
\end{thm}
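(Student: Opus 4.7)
My plan is to read both inequalities directly off the stockyard characterization of Theorem \ref{gschar}, exploiting the uniformity in $p$ required by the UGS.

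\emph{Upper bound $f(\delta)\lesssim \delta^2$.} First I would specialize Theorem \ref{gschar} at the threshold scale $\delta_0$ to the simplest possible stockyard---a single disk $D(p_1,\delta_0)$ centered at $p_1$---to obtain the uniform estimate
\[
\int_{D(p_1,\delta_0)} \Delta P\, dm \;\le\; \Lambda(p,\delta_0) \;\lesssim\; f(\delta_0).
\]
Since $f(\delta_0)$ is an absolute constant, the $\Delta P$-mass of every disk of radius $\delta_0$ is uniformly bounded. Covering $D(p_1,\delta)$ by $O((\delta/\delta_0)^2)$ such small disks then yields $\int_{D(p_1,\delta)} \Delta P\, dm\lesssim \delta^2$. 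Any $(p_1,\delta)$-stockyard is localized in an enlarged disk $D(p_1,C\delta)$, so summing its contributions gives $\sum_i \int_{R_i}\Delta P\,dm \le \int_{D(p_1,C\delta)} \Delta P\, dm\lesssim \delta^2$, and Theorem \ref{gschar} produces $\Lambda(p,\delta)\lesssim \delta^2$, hence $f(\delta)\lesssim \delta^2$.

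\emph{Lower bound $f(\delta)\gtrsim \delta$.} Uniformity of the UGS furnishes the reverse inequality $\Lambda(q,\delta_0)\gtrsim c_0 := f(\delta_0) > 0$ at \emph{every} point $q\in\mathbb{C}$, so Theorem \ref{gschar} supplies, at each $q$, a local $(q,\delta_0)$-stockyard whose $\Delta P$-mass is at least $c_0/2$. I would then concatenate $N\sim \delta/\delta_0$ such local stockyards, anchored at equally-spaced points $q_0=p_1,q_1,\dots,q_N$ along a ray emanating from $p_1$ of total length $\delta$, and argue that their union forms a valid $(p_1,\delta)$-stockyard. The resulting total $\Delta P$-mass is at least $(c_0/2)N \gtrsim \delta$, so $\Lambda(p,\delta)\gtrsim \delta$ and hence $f(\delta)\gtrsim \delta$.

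\emph{Main obstacle.} The principal technical point is verifying that the constructions above are compatible with the axioms defining a $(p_1,\delta)$-stockyard. Confining a stockyard to an enlarged disk (for the upper bound) should be routine, but concatenating local stockyards at distinct anchors into a single stockyard at $p_1$ requires a careful check of the connectedness and regularity clauses; if direct concatenation is not admissible, one would need to pad the construction with auxiliary small disks along the connecting ray whose $\Delta P$-mass is negligible compared with the total. Granted this, both inequalities of Theorem \ref{ugschar} follow immediately from Theorem \ref{gschar}.
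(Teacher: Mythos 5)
Your lower-bound argument is sound and close in spirit to the paper's, though it differs in a useful way. The paper invokes Lemma \ref{necessary}(a) to locate a single disk $B(z,\hat\delta)$ near $z_0$ carrying mass $\gtrsim \hat\delta+\hat\delta^2$, and then builds a stockyard by repeating that same disk roughly $\delta/\hat\delta$ times; you instead invoke the UGS directly at each of $N\sim\delta/\delta_0$ anchors along a ray to produce local stockyards and then bridge them. Your version avoids Lemma \ref{necessary}(a) at the cost of a more delicate connectivity check (which you flag). Either route yields $f(\delta)\gtrsim\delta$.

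The upper-bound argument, however, has a genuine gap. The inequality
\[
\sum_i \int_{R_i}\Delta P\,dm \;\le\; \int_{D(p_1,C\delta)}\Delta P\,dm
\]
is false in general: the pens $R_1,\dots,R_N$ in a stockyard are allowed to overlap, and indeed to coincide. Example \ref{linearugs} in the paper takes every pen equal to the same disk $D_k$, and that repetition is precisely the mechanism that produces linear growth of $\Lambda$. So bounding the sum of pen masses by the mass of the union (or of a containing disk) loses a potentially unbounded multiplicity factor. Concretely, if $\Delta P$ concentrates heavily near a point (which your hypotheses permit, since $\|\Delta P\|_\infty$ is not assumed finite), a stockyard of $N$ tiny pens of perimeter $\epsilon$ each, all near that point, has total perimeter $N\epsilon\le\delta$ but can carry mass roughly $N$ times the mass of one pen, with $N$ as large as $\delta/\epsilon$. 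Your uniform bound $\int_{D(z,\delta_0)}\Delta P\lesssim f(\delta_0)$ caps each pen's contribution by a constant but does not scale down with pen size, so the total is not controlled by $\delta^2$. This is exactly what Lemma \ref{necessary}(b) supplies and what the paper's proof uses: a per-pen estimate $\int_{R_i}\Delta P\lesssim P(R_i)+P(R_i)^2$, valid at \emph{all} scales including small ones, which sums to $\lesssim\delta+\delta^2\lesssim\delta^2$ because $\sum P(R_i)\le\delta$. Establishing that small-scale estimate from the UGS requires its own argument (a scaling/repetition argument run in reverse), and your proposal does not provide it. (A secondary, fixable point: a disk centered at $p_1$ is not a valid $(p_1,\delta_0)$-stockyard since $p_1$ must lie on the union of the pen boundaries, and its perimeter must be $\le\delta_0$; you would need to offset and shrink the disk.)
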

\smallskip

Using these estimates, we are able to parametrize large CC balls and obtain explicit formulas for $d(p,q)$ when $p$ and $q$ are far apart.
These estimates will be extensively used in future works, where we will study the Szeg\H{o} kernel and related objects on these non-polynomial model domains.

The rest of the paper is structured as follows. 
Section \ref{sec:notation} sets the notation that will be used throughout the paper. 
In Section \ref{sec:globalstructure} we prove Theorem \ref{gschar} and provide two extreme examples of uniform global structures. 
Section \ref{sec:uniformglobalstructures} characterizes the possible uniform global structures, and contains a proof of Theorem \ref{ugschar}. 
We parametrize the balls $B_d(p,\delta)$ and give large-scale estimates for the metric $d$ in Section \ref{sec:metric}.
Finally, Section \ref{sec:conclusion} concludes the paper.

\pagestyle{fancy}
\renewcommand{\leftmark}{}
\renewcommand{\rightmark}{}

\section{Notation}\label{sec:notation}

We write $A\lesssim B$ when there exists a constant $C$, independent of all relevant quantities, such that $A\leq CB$. 
We will say $A\gtrsim B$ if $B\lesssim A$, and we write $A\approx B$ if both $A\lesssim B$ and $B\lesssim A$. 

When working in $\mathbb{C}\approx \mathbb{R}^2$, $dm(\cdot)$ will denote Lebesgue measure with respect to $\cdot$ and $|z|$ will denote the modulus of $z$. We will use the notation $B(z,\delta)$ (no subscript) to denote $\left\{ w\ :\ |z-w|<\delta\right\}$.

To simplify some of the proofs, we define a special class of real functions on $[0,1]$. 
We say that $f\in FPWS[0,1]$ if there exist a finite number of points $0=a_0<a_1<\ldots<a_N<a_{N+1}=1$ such that, for all $i=0,\dots,N$, $f$ is smooth on $(a_i,a_{i+1})$ and $f|_{(a_i,a_{i+1})}$ extends continuously to $[a_i,a_{i+1}]$.
If $A\subset\mathbb{R}^2$, then ${\rm b}A$ is FPWS if it can be locally parametrized by a FPWS function.

\section{The Global Structure}\label{sec:globalstructure}

Let $\Omega$ be as in the introduction.
Identify ${\rm b}\Omega$ with $\mathbb{C}\times\mathbb{R}\approx \mathbb{R}^3$ via $(z_1,t+iP(z_1))\sim (z_1,t)\sim (x,y,t)$, and equip ${\rm b}\Omega$ with Lebesgue measure. 
The space of tangential Cauchy-Riemann operators on ${\rm b}\Omega$ is spanned by $\bar{Z}=\partial_{\bar{z}_1}-2i\partial_{\bar{z}_1}P(z_1)\partial_{\bar{z}_2},$ which under the identification with $\mathbb{C}\times\mathbb{R}$ becomes $\bar{Z}=\partial_{\bar{z}_1}-i\partial_{\bar{z}_1}P(z_1)\partial_{t}.$ 
Decompose $\bar{Z}=X-iY$, where $X$ and $Y$ are the real vector fields given by \[X=\frac{1}{2}\partial_{x}+\frac{1}{2}\partial_{y}P(x,y)\partial_{t},\qquad Y=-\frac{1}{2}\partial_{y}+\frac{1}{2}\partial_{x}P(x,y)\partial_{t}.\]

Let $d:{\rm b}\Omega\times {\rm b}\Omega\rightarrow [0,\infty)$ be the Carnot-Carath\'eodory metric associated to the vector fields $X$ and $Y$. 
That is, 
\begin{eqnarray*} d(p_0,p_1)&=& \inf\left\{ \delta\ :\ \exists \gamma:[0,1]\rightarrow {\rm b}\Omega,\ \gamma(0)=p_0,\ \gamma(1)=p_1,\right.\\ 
														& & \qquad\qquad \gamma'(t)=\alpha(t)\delta X(\gamma(t))+\beta(t)\delta Y(\gamma(t))\ a.e.,\\
														& & \qquad\qquad \left. \alpha,\beta\in FPWS[0,1],\ |\alpha(t)|^2+|\beta(t)|^2<1\ a.e.\right\}.
\end{eqnarray*}

We will use the notation $B_d(p,\delta):=\left\{ q\ :\ d(p,q)<\delta\right\}$ to denote the CC-ball with center $p$ and radius $\delta$. 

Let $\mathscr{X}=(FPWS[0,1])^2\cap \left\{ (\alpha,\beta)\ :\ |\alpha(s)|^2+|\beta(s)|^2<1\right\}$, and \[\mathscr{X}^\ast=\left\{ (\alpha,\beta)\in\mathscr{X}\ :\ \int_0^1 \alpha(s)ds=0,\ \int_0^1 \beta(s)ds=0 \right\}.\]
For $(\alpha,\beta)\in\mathscr{X}$, $p_0=(z_0,t_0)=(x_0,y_0,t_0)\in {\rm b}\Omega$, and $\delta>0$, we define $F_{(\alpha,\beta),\delta}(p_0)=\gamma(1)$, where $\gamma:[0,1]\rightarrow {\rm b}\Omega$ is the path with \begin{equation}\label{ccpath} \gamma(0)=p_0\qquad\mbox{and}\qquad \gamma'(s)=\alpha(s)\delta X(\gamma(s))+\beta(s)\delta Y(\gamma(s)),\ a.e.\end{equation}

By setting $a=\int_0^1 \alpha(t)dt$ and $b=\int_0^1 \beta(t)dt$, we get \[F_{(\alpha,\beta),\delta}(p_0)=(x_0+\frac{\delta a}{2},y_0-\frac{\delta b}{2},t_0+\Lambda_{z_0,\delta}(\alpha,\beta)),\] where \begin{equation}\label{deflambda}\Lambda_{z_0,\delta}(\alpha,\beta)=\int_0^1 \left[ \partial_{y}P(\gamma(r))\gamma_1'(r)-\partial_{x}P(\gamma(r))\gamma_2'(r)\right]dr,\end{equation} \[\gamma_1(r)=x_0+\frac{\delta}{2}\int_0^r \alpha(s)ds,\qquad \gamma_2(r)=y_0-\frac{\delta}{2}\int_0^r \beta(s)ds.\]

Let $\pi\gamma$ be the projection of $\gamma$ onto the $xy$-plane. 
We denote by $L(\pi\gamma):=\delta\int_0^1 \sqrt{|\alpha(t)|^2+|\beta(t)|^2}dt$ the length of $\pi\gamma$. 
If $\alpha$ and $\beta$ have mean $0$ (i.e. if $(\alpha,\beta)\in\mathscr{X}^\ast$), then the projection $\pi\gamma$ is a closed curve in $\mathbb{C}$. 
Analyzing the integral (\ref{deflambda}) over this curve is essential to understanding the structure of $d$. 
With this view we make the following definition.

\begin{defn}{\rm Define $\Lambda(p_0,\delta):= \displaystyle\sup_{(\alpha,\beta)\in\mathscr{X}^\ast} \left| \Lambda_{z_0,\delta}(\alpha,\beta)\right|.$
}\end{defn}

\begin{rem}\label{revpaths}
{\rm $\Lambda(p_0,\delta)$ measures the distance, in the $t$ direction, from $p_0$ to the boundary of $B_d(p_0,\delta)$. 
Since the coefficients of the ODE (\ref{ccpath}) are independent of $t$, we see that the distance $\Lambda(p_0,\delta)$ is actually independent of $t_0$.
}\end{rem}

If $\pi\gamma$ above is a simple, piecewise smooth, negatively oriented closed curve bounding a region $R$, then an application of Green's theorem gives \[\Lambda_{z_0,\delta}(\alpha,\beta)=\int_R \Delta P(w) dm(w),\] which states that the amount that $\gamma$ alters the $t$-coordinate of our initial point is equal to the mass of $\Delta P$ contained within $R$.

In general, though, $\pi\gamma$ may be far from simple. 
To work around this, we will replace $\pi\gamma$ by a collection of simple, piecewise smooth, negatively oriented closed curves by essentially breaking the trace of $\gamma$ into the traces of such curves. 
We describe $\Lambda_{z_0,\delta}(\alpha,\beta)$ by studying the collection of regions bounded by these curves. 
To this end, we make the following definition.

\begin{defn}\label{stockyarddef}{\rm 
We say $A\subset\mathbb{C}$ is a \emph{pen} if $A$ is open, connected, simply connected, and if ${\rm b} A$ is $FPWS$. 
We call $P(A)=L({\rm b}A)$ (the perimeter of $A$) the amount of \emph{fencing} used to enclose $A$. 
For a fixed $z_0\in \mathbb{C}$ and $\delta>0$, we say that a finite collection of pens $R=\left\{ R_1,\ldots,R_N\right\}$ is a \emph{$(z_0,\delta)$-stockyard} if 
\[z_0\in \bigcup_{i=1}^N {\rm b} R_i,\quad \displaystyle\sum_{i=1}^N {\rm P}({\rm b} R_i)\leq\delta,\quad\mbox{and}\quad \bigcup_{i=1}^N{\rm b} R_i\ \mbox{is connected}.\]
A $(z_0,\delta)$-stockyard is therefore comprised of a chain of nonempty pens, where the union of the boundaries of the pens is continuous, and the total amount of fencing used to enclose the pens is bounded by $\delta$. 
}\end{defn}

We are now in a position to prove Theorem \ref{gschar}.

\begin{proof}[Proof of Theorem \ref{gschar}]
Because $\Lambda_{z_0,\delta}(\alpha,\beta)$ can be viewed as a line integral in the plane, for $(\alpha,\beta)\in\mathscr{X}$ we will say `the curve generated by $(\alpha,\beta)$' to refer to the curve $\gamma:[0,1]\rightarrow\mathbb{C}$ such that $\gamma(0)=z_0$ and, for almost every $t$, $\gamma'(t)= \frac{\alpha(t)\delta}{2}\partial_x\big|_{\gamma(t)}- \frac{\beta(t)\delta}{2}\partial_y\big|_{\gamma(t)}$.

Fix a nontrivial $(\alpha_0,\beta_0)\in\mathscr{X}^\ast$, and let $\gamma_0$ be the curve determined by $(\alpha_0,\beta_0)$. 
The proof of the theorem is now an advanced calculus exercise, which we break into several steps.

First, uniformly approximate $\gamma_0$ by a piecewise linear closed curve $\gamma_1$ (generated by $(\alpha_1,\beta_1)\in\mathscr{X}^\ast$) with $L(\gamma_1)< L(\gamma)$, $\gamma_1(0)=z_0$, and where $\Lambda_{z_0,\delta}(\alpha_1,\beta_1)$ is arbitrarily close to $\Lambda_{z_0,\delta}(\alpha_0,\beta_0)$. 
By slightly perturbing $\gamma_1$, we obtain another piecewise linear closed curve $\gamma_2$ (generated by $(\alpha_2,\beta_2)\in\mathscr{X}^\ast$) with $L(\gamma_2)\leq L(\gamma)$, $\gamma_2(0)=z_0$, and where $\gamma_2$ has finitely many self-intersections.

We now decompose $\gamma_2$ into the union of oriented curves $\eta_1,\ldots,\eta_N$, where $\sum L(\eta_i)\leq \delta$. 
To do this, let $\mathscr{V}=\left\{V_1=z_0,V_2,\ldots,V_N\right\}$ denote the finitely-many self intersection points of $\gamma_2$ (together with the point $z_0$), and let $\left\{ t_i\right\} = (\gamma_2)^{-1}(\mathscr{V})$. 
Without loss of generality, we may assume that $0=t_0<t_1<\cdots<t_{M}=1$. 
For $i=1,\ldots M$, define the edge $E_i$ by $E_i = (\gamma_2(t_{i-1}),\gamma_2(t_i))$. 
Setting $\mathscr{E}=\left\{ E_1,\ldots E_M\right\}$, we see that $(\mathscr{E},\mathscr{V})$ is a directed Eulerian graph.
Since $(\mathscr{E},\mathscr{V})$ is Eulerian, we may decompose it into a finite number of edge-disjoint directed cycles $C_1,\ldots,$ $C_N$. 
Parametrizing $C_i$ by a curve $\eta_i$ whose orientation agrees with the orientation of $C_i$, we see that $\eta_i$ is a (piecewise linear) oriented simple closed curve. 
Moreover, by our construction of $\gamma_2$, $\sum L(\eta_i)\leq\delta$, as desired.

We now show that $\displaystyle{\Lambda}(p_0,\delta) \leq \genfrac{}{}{0pt}{1}{\sup}{(z_0,\delta)-stockyards\ R}\ \sum_{R_i\in R} \int_{R_i} \Delta P(w) dm(w)$. 
Denote by $R_i$ the region bounded by $\eta_i$, and apply Green's theorem to obtain 
$$-\displaystyle\oint_{\gamma_2}\partial_{y}Pdx-\partial_{x}Pdy = \displaystyle\sum_{i=1}^N (-1)^{\mbox{sign of}\ \eta_i}\displaystyle\int_{R_i} \Delta P(w)dm(w).$$
By changing the orientations of the positively oriented $\eta_i$,
$$\displaystyle\oint_{\gamma_2}\partial_{y}Pdx-\partial_{x}Pdy \leq \displaystyle\sum_{i=1}^N \displaystyle\int_{R_i} \Delta P(w)dm(w),$$
which, because $(R_1,\ldots,R_N)$ is a $(z_0,\delta)$-stockyard, implies 
$$\displaystyle{\Lambda}(p_0,\delta) \leq \genfrac{}{}{0pt}{1}{\sup}{(z_0,\delta)-stockyards\ R}\ \sum_{R_i\in R} \int_{R_i} \Delta P(w) dm(w).$$

Let $R=(R_1,\ldots,R_N)$ be a $(z_0,\delta)$-stockyard. 
It remains to show that there exists a FPWS curve $\gamma$ with $L(\gamma)\leq \delta$, $\gamma(0)=z_0$, and $$\displaystyle\oint_{\gamma}\partial_{y}Pdx-\partial_{x}Pdy \leq \displaystyle\sum_{i=1}^N \displaystyle\int_{R_i} \Delta P(w)dm(w).$$ 

If $R=(R_1)$, then we merely parametrize ${\rm b} R_1$ in the negative direction to get the result. 
If we are able to produce such a curve $\tilde{\gamma}$ for $\tilde{R}=(R_1,\ldots,R_{N-1})$, then we choose some $t^\ast\in[0,1]$ such that $\tilde{\gamma}(t^\ast)\in {\rm b} R_N$. 
Reparametrize $\tilde\gamma$ so that $t^\ast=0$. 
If $\eta$ is a negatively oriented parametrization of ${\rm b} R_N$ with $\eta(0)=\tilde\gamma(1)$, then we concatenate these two curves together and reparametrize to get a curve $\gamma$ with the required properties. 
This concludes the proof.		
\end{proof}
\smallskip

\begin{defn}{\rm The function ${\Lambda}:{\rm b}\Omega\times (0,\infty)\rightarrow [0,\infty)$ is called the \emph{global structure} of ${\rm b}\Omega$. If there exists $\delta_0>0$ and a function $f:[\delta_0,\infty)\rightarrow [0,\infty)$ such that $\Lambda(p_0,\delta)\approx f(\delta)$ for $\delta\geq \delta_0$, with constants depending only on $\delta_0$, then we say that $(f(\delta),\delta_0)$ is a \emph{uniform global structure} (UGS) of ${\rm b}\Omega$.
}\end{defn}

\begin{ex}[Example with UGS $(\delta^2,\delta_0)$]\label{quadugs}{\rm If $P(z)=|z|^2$, then $\Delta P \equiv 1.$ A trivial application of the isoperimetric inequality implies that $\Lambda(p_0,\delta)\equiv \frac{1}{\pi}\delta^2$. Hence, ${\rm b}\Omega$ has UGS $(\delta^2,0)$.
}\end{ex}

\begin{ex}[Example with UGS $(\delta,\delta_0)$]\label{linearugs}{\rm 
Enumerate $\left\{ (10m,10n):(m,n)\in\mathbb{Z}^2\right\}$ by $\left\{ c_k\right\}_{k\in\mathbb{N}}$ and let $D_k$ be the disc with center $c_k$ and radius $r_k=2^{-k}$.
Define $h_k=\frac{1}{\pi}2^k$, so that $\pi r_k^2 h_k = 2^{-k}$, and let $H(z)=h_k$ if $z\in D_k$, and $H(z)=0$ otherwise. 
Then $H(z)\geq 0$ and $\int_{\mathbb{C}} H(w)dm(w)=1$. 
Moreover, by convolving with the Newtonian potential, we get a function $P(z)$ with $\Delta P=H$.
Also, for $k\geq 0$, we trivially have $2^k \sup_{z} \int_{B(z,2^{-k})} \Delta P(w)dm(w)=1.$
We show that, for $P$, we have $\Lambda(p_0,\delta)\approx \delta$ if $\delta\geq 30$.

For $z\in \mathbb{C}$ and $\delta\geq 30$, consider the stockyard $R$ given by the following construction. 
Choose $k$ such that ${\rm dist}(z,D_k)=\min_l {\rm dist}(z,D_l)$. 
Let $R_0$ be a pen which touches both $z$ and $z_b\in{\rm b} D_k$, and has essentially minimal perimeter (i.e. $P(R_0)\approx {\rm dist}(z,D_k)$). 
By construction, we have $P(R_0)\leq 20$. 
Of course, if $z\in{\rm b} D_k$ then no $R_0$ is necessary. 
Choose $N=\lfloor (2\pi)^{-1}2^k(\delta-P(R_0))\rfloor$, and let each of $R_1,\ldots,R_N$ be $D_k$. 
Then $(R_0,\ldots,R_N)$ is a $(z,\delta)$-stockyard. 
Moreover, because $0\leq \int_{R_0}\Delta P(w)dm(w) \leq \int_{D_k}\Delta P(w)dm(w)$, we see that $\sum_{i=0}^N \int_{R_i} \Delta P(w)dm(w) \approx N 2^{-k}\approx \delta.$

To see that this is essentially the best that can be done, let $R=(R_1,\ldots,R_N)$ be an arbitrary $(z,\delta)$-stockyard. 
If $R_i\cap D_k=\emptyset$ for all $k$, then $\int_{R_i}\Delta P(w) dm(w)=0$.
If $R_i\cap D_k\neq \emptyset$ and $P(R_i)\leq 1$, then by the isoperimetric inequality we have $$\displaystyle\int_{R_i} \Delta P(w) dm(w) \leq \begin{cases} P(R_i)^2 2^k,\quad &\mbox{if}\ P(R_i)\leq r_k,\\ 2^{-k},\quad&\mbox{if}\ P(R_i)\geq r_k.\end{cases}$$ 
Hence, if $P(R_i)\leq 1$ then $\int_{R_i}\Delta P(w) dm(w) \leq P(R_i).$
Because $\|\Delta P\|_{L^1}=1$, for $P(R_i)\geq 1$ we have $\int_{R_i}\Delta P(w) dm(w) \leq 1\leq P(R_i).$ 
The conclusion that $\displaystyle\sum_{i=1}^N \int_{R_i}\Delta P(w) dm(w) \leq \delta$ proves the claim.
}\end{ex}

\section{Uniform Global Structures}\label{sec:uniformglobalstructures}

Before we prove Theorem \ref{ugschar}, we need a technical lemma. 

\begin{lem}\label{necessary} If ${\rm b}\Omega$ has a UGS, then there are constants $0<C_1<C_2$, depending on $\Delta P$ and $\delta_0$, such that
\begin{itemize}
	\item[(a)] $\displaystyle\phantom{p}\inf_{z_0\in\mathbb{C}}\ \sup_{z\in B(z_0,\delta)}\ \sup_{0<\hat\delta\leq \delta}\ (\hat\delta+\hat\delta^2)^{-1}\int_{B(z,\hat\delta)} \Delta P(w) dm(w)\geq C_1,\qquad \delta\geq \delta_0;$
	\item[(b)] $\displaystyle\sup_{z_0\in\mathbb{C}}\ \sup_{\delta>0}\ (\delta+\delta^2)^{-1}\int_{B(z_0,\delta)} \Delta P(w) dm(w)\leq C_2.$
\end{itemize}
\end{lem}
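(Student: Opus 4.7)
My plan is to translate each direction of the UGS inequality $cf(\delta)\le \Lambda(p_0,\delta)\le Cf(\delta)$ (valid for $\delta\ge\delta_0$ uniformly in $p_0$) into one of the two bounds, using Theorem \ref{gschar} as the bridge between CC paths and integrals against $\Delta P$. The one geometric fact I will use repeatedly is that a simply connected pen of perimeter $P_i$ has diameter at most $P_i/2$, and hence fits inside any Euclidean ball of radius $P_i/2$ centered at one of its points; this follows from the shorter-arc argument for points on the Jordan boundary and a line-segment argument for interior points.

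For part (b), I first treat small radii $\delta\le\delta_0/(2\pi)$. Given $z_0$ and such $\delta$, pick $\zeta$ with $|\zeta-z_0|=\delta$ so that $\zeta\in {\rm b}B(z_0,\delta)$, and form the $(\zeta,\delta_0)$-stockyard consisting of $N=\lfloor\delta_0/(2\pi\delta)\rfloor$ copies of the pen $B(z_0,\delta)$, whose total perimeter is $2\pi N\delta\le\delta_0$. With $q_0=(\zeta,0)\in{\rm b}\Omega$, Theorem \ref{gschar} yields $\Lambda(q_0,\delta_0)\ge N\int_{B(z_0,\delta)}\Delta P$, and combining with the UGS upper bound $\Lambda(q_0,\delta_0)\le Cf(\delta_0)$ gives the linear estimate $\int_{B(z_0,\delta)}\Delta P\lesssim \delta$. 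For $\delta>\delta_0/(2\pi)$, I would cover $B(z_0,\delta)$ with $O((\delta/\delta_0)^2)$ Euclidean balls of a fixed small radius $\le\delta_0/(2\pi)$ and sum the previous estimate to obtain $\int_{B(z_0,\delta)}\Delta P\lesssim \delta^2$. The two regimes combine into the claimed $C_2(\delta+\delta^2)$ bound.

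For part (a), fix $z_0$ and let $p_0=(z_0,0)$. The UGS lower bound $\Lambda(p_0,\delta_0)\ge cf(\delta_0)>0$ together with Theorem \ref{gschar} supplies a $(z_0,\delta_0)$-stockyard $R=(R_1,\ldots,R_N)$ with $\sum_i\int_{R_i}\Delta P\ge \tfrac12 cf(\delta_0)$. For each pen let $\hat\delta_i$ be half the perimeter of $R_i$ and choose $z_i\in {\rm b}R_i$; the connectivity and total-length constraint on $\bigcup_j {\rm b}R_j$ force $z_i\in\overline{B(z_0,\delta_0)}$, while the diameter bound yields $R_i\subset B(z_i,\hat\delta_i)$. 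Since $\sum_i\hat\delta_i\le \delta_0/2$ and each $\hat\delta_i\le \delta_0/2$, we have $\sum_i(\hat\delta_i+\hat\delta_i^2)\le \tfrac12(\delta_0+\delta_0^2)$, so the averaging inequality $\sum A_i\le (\max_i A_i/w_i)\sum w_i$ produces an index $i$ for which $\int_{B(z_i,\hat\delta_i)}\Delta P\ge C_1(\hat\delta_i+\hat\delta_i^2)$ with $C_1\sim cf(\delta_0)/(\delta_0+\delta_0^2)$. Since $\delta\ge\delta_0$, the pair $(z,\hat\delta)=(z_i,\hat\delta_i)$ is admissible in the sup defining (a), proving the claim.

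The principal technical obstacle is justifying the ``$N$ copies of a single pen'' construction used in (b), since the notation $\{R_1,\ldots,R_N\}$ in Definition \ref{stockyarddef} could be read as forbidding repetitions. I would handle this either by observing that the Eulerian-cycle decomposition in the proof of Theorem \ref{gschar} naturally produces a multiset of cycles whose bounded regions may coincide (so multiplicity is implicitly permitted), or by perturbing the $N$ copies of $B(z_0,\delta)$ into genuinely distinct pens whose boundaries remain connected and whose $\Delta P$-integrals agree up to arbitrarily small error. Minor bookkeeping---tracking constants in the transition regime $\delta\approx\delta_0/(2\pi)$ in (b), and noting $\overline{B(z_0,\delta_0)}\subset B(z_0,\delta)$ when $\delta>\delta_0$---is routine.
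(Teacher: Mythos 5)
Your proof is correct, and for part (a) it is essentially the contrapositive of the paper's argument: the same key geometric fact (each pen $R_i$ fits inside a Euclidean ball of radius comparable to its perimeter, with $\sum(P(R_i)+P(R_i)^2)\lesssim \delta+\delta^2$) is used, just run directly — you take a near-optimal stockyard and extract a good ball by averaging, where the paper assumes no good ball exists and shows every stockyard must then be poor, forcing $\Lambda(p_0,\delta_0)\to 0$. The real added value is your treatment of (b), which the paper dismisses with ``the proofs of both parts are similar''; you make it explicit that (b) requires a two-regime argument (stacking $N\approx\delta_0/\delta$ copies of $B(z_0,\delta)$ into a $\delta_0$-stockyard for small $\delta$, a covering argument for large $\delta$), and you correctly flag the repeated-pen issue in Definition~\ref{stockyarddef} — which the paper itself implicitly resolves by reusing pens in Example~\ref{linearugs}. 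The only points worth tightening are cosmetic: $R_i\subset\overline{B(z_i,\hat\delta_i)}$ rather than the open ball (harmless since $\Delta P$ is locally integrable), and the edge case $\delta=\delta_0$ where $z_i$ could a priori sit on $\partial B(z_0,\delta_0)$ (avoidable by noting the fencing bounds closed Jordan curves, so the arc-distance from $z_0$ to $z_i$ is strictly less than $\delta_0$, or simply by working with $(z_0,\delta_0/2)$-stockyards).
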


\begin{proof}
Because the proofs of both parts are similar, we only include the proof of (a). 
If (a) does not hold, then for any arbitrarily large $\delta$ and small $\epsilon>0$, we can find $z_0\in\mathbb{C}$ such that \[\sup_{z\in B(z_0,\delta)}\ \sup_{0<\hat\delta\leq \delta}\ (\hat\delta+\hat\delta^2)^{-1}\int_{B(z,\hat\delta)} \Delta P(w) dm(w)\leq \epsilon.\]
Let $R=(R_1,\ldots,R_N)$ be a $(z_0,\delta)$-stockyard. Because $R_i\subset B(z,P(R_i))$ for any $z\in R_i\subset B(z_0,\delta)$, we have
$$\int_{R_i} \Delta P(w) dm(w) \leq \int_{B(z,P(R_i))} \Delta P(w) dm(w) \leq \epsilon(P(R_i)+P(R_i)^2),$$
and therefore, because $\sum P(R_i)\leq \delta$,
$$\displaystyle\sum_{R_i\in R} \int_{R_i}\Delta P(w) dm(w) \leq \epsilon\displaystyle\sum_{R_i\in R} (P(R_i)+P(R_i)^2)\leq \epsilon(\delta+\delta^2).$$
Taking $\epsilon$ arbitrarily small, we see that there can be no UGS.
\end{proof}
\smallskip

We are now in a position to prove Theorem \ref{ugschar}.

\begin{proof}[Proof of Theorem \ref{ugschar}]
Fix a point $z_0\in\mathbb{C}$ and $\delta\geq \delta_0$. Choose a $(z_0,\delta)$-stockyard $R=(R_1,\ldots,R_N)$ with $\displaystyle\sum_{R_i\in R} \int_{R_i}\Delta P(w) dm(w) \approx \Lambda(p_0,\delta).$

For the bound $f(\delta)\lesssim \delta^2$ (when $\delta_0\geq 1$), Lemma \ref{necessary}(b) gives
\[\displaystyle\sum_{R_i\in R} \int_{R_i} \Delta P(w) dm(w) \leq \displaystyle\sum_{R_i\in R} \int_{B(z_i,P(R_i))} \Delta P(w) dm(w) \lesssim \delta^2.\]

For the lower bound, choose $\hat\delta\leq\delta$ and $z\in B(z_0,\delta)$ such that $$(\hat\delta+\hat\delta^2)^{-1}\int_{B(z,\hat\delta)}\Delta P(w) dm(w) \gtrsim 1.$$
Taking a $(z_0,3\hat\delta)$-stockyard $R$ which includes $N\approx\delta\hat\delta^{-1}$ pens of the form $B(z,\hat\delta)$, we have \[f(3\delta)\geq\displaystyle\sum_{R_i\in R} \int_{R_i} \Delta P(w) dm(w) \gtrsim \delta\hat\delta^{-1}(\hat\delta+\hat\delta^2)\gtrsim\delta,\]
or $f(\delta)\gtrsim \delta$ if $\delta\geq 3\delta_0$.
This concludes the proof.
\end{proof}
\smallskip

The next theorem shows that, if $\|\Delta P\|_\infty<\infty$, then there is only one possible UGS.

\begin{thm}\label{approxquad} If $\|\Delta P\|_\infty<\infty$ then the following are equivalent:
	\begin{itemize}
		\item[(a)] ${\rm b}\Omega$ has a UGS.
		\item[(b)] $(\delta^2,\delta_0)$ is a UGS for ${\rm b}\Omega$.
		\item[(c)] $\displaystyle |B(z,\delta)|^{-1}\int_{B(z,\delta)} \Delta P(w)dm(w)\approx 1,$ uniformly in $z\in\mathbb{C}$ and $\delta\geq \delta_0$. 
	\end{itemize}
\end{thm}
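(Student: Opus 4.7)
The plan is to establish the cycle $(b)\Rightarrow (a)\Rightarrow (c)\Rightarrow (b)$, with $(b)\Rightarrow (a)$ immediate from the definition of a UGS.

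For $(c)\Rightarrow (b)$, I would show $\Lambda(p_0,\delta)\approx \delta^2$ for $\delta$ past a threshold, using Theorem \ref{gschar}. For the upper bound, given any $(z_0,\delta)$-stockyard $R=(R_1,\ldots,R_N)$, the planar isoperimetric inequality combined with $\|\Delta P\|_\infty<\infty$ yields $\int_{R_i}\Delta P\leq \|\Delta P\|_\infty P(R_i)^2/(4\pi)$, after which $\sum P(R_i)^2\leq (\sum P(R_i))^2\leq \delta^2$ closes the estimate. For the matching lower bound, take the single-pen stockyard consisting of the disc of radius $\delta/(2\pi)$ whose boundary passes through $z_0$; as soon as $\delta/(2\pi)\geq\delta_0$, hypothesis (c) gives $\Delta P$-mass $\approx \delta^2$ on this disc.

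For $(a)\Rightarrow (c)$, the upper estimate $|B(z,\delta)|^{-1}\int_{B(z,\delta)} \Delta P\leq \|\Delta P\|_\infty$ is trivial; the heart of the proof is the matching lower bound. The key observation is that Lemma \ref{necessary}(a), together with the pointwise bound $\int_{B(z',\hat\delta)}\Delta P\leq \|\Delta P\|_\infty\pi\hat\delta^2$, forces the good radius $\hat\delta$ produced by the lemma to satisfy $\hat\delta\geq c_0>0$ for an absolute constant $c_0 = C_1/(\pi\|\Delta P\|_\infty - C_1)$. Applying the lemma with scale $\delta_0$ centered at an arbitrary point $z\in\mathbb{C}$ then yields $\int_{B(z,2\delta_0)}\Delta P\geq C_1 c_0^2$ uniformly in $z$. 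Packing $B(z_0,\delta)$ with $\sim (\delta/\delta_0)^2$ pairwise disjoint balls of radius $2\delta_0$ centered on a grid delivers $\int_{B(z_0,\delta)}\Delta P\gtrsim \delta^2$ for $\delta$ sufficiently large relative to $\delta_0$.

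The main obstacle is precisely this last aggregation step in $(a)\Rightarrow (c)$: Lemma \ref{necessary}(a) only asserts the existence of one good ball somewhere near each given point, whereas (c) demands a density lower bound on every large disc centered at every point. The decisive input is the finiteness of $\|\Delta P\|_\infty$, which rules out the point-mass style concentrations of Example \ref{linearugs} and forces the good balls to be of bounded-below radius, thereby making a clean disjoint-packing argument possible.
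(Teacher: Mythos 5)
Your proof is correct and takes essentially the same route as the paper's: the crux of $(a)\Rightarrow(c)$ is, as you identify, that $\|\Delta P\|_\infty<\infty$ forces the good radius $\hat\delta$ from Lemma~\ref{necessary}(a) to be bounded below, after which a covering/packing step (the paper invokes the Vitali covering lemma where you tile on a grid, but the two are interchangeable here) yields the uniform lower density bound. Your $(c)\Rightarrow(b)$ via the isoperimetric inequality for the upper bound and a single circular pen for the lower bound is likewise what the paper does, only more tersely.
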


\begin{proof}
We first note that (b) trivially implies (a).

If (a) holds, then choose $0<C_1<C_2$ satisfying the conclusion of Lemma \ref{necessary}.
Noting that\newline $\delta^{-1}\int_{B(z,\delta)} \Delta P(w) dm(w) \leq \pi\delta\|\Delta P\|_\infty$, we obtain
$$\displaystyle\inf_{z_0\in\mathbb{C}} \sup_{|z_0-z|<\delta}\sup_{\tilde{C}_1\|\Delta P\|_\infty^{-1}<\hat\delta\leq \delta} (\hat\delta+\hat\delta^2)^{-1}\int_{B(z,\hat\delta)} \Delta P(w) dm(w)>C_1.$$
Taking $\delta=\delta_0$, we conclude that for every $z_0\in\mathbb{C}$, $B(z_0,\delta_0)$ contains a disc whose radius is comparable to $\delta_0$ and on which the average of $\Delta P$ is bounded away from zero. This in turn implies that
$$\inf_{z_0\in\mathbb{C}} \delta_0^{-2} \int_{B(z_0,\delta_0)} \Delta P(w) dm(w) \geq\tilde{c}>0.$$
By covering large balls with balls of radius $\delta_0$ and applying the Vitali covering lemma, we conclude the proof of (c).

That (c) implies (b) follows by taking a stockyard which consists of one large circular pen.
\end{proof}
\smallskip

\section{Estimates for $d$}\label{sec:metric}

In this section we obtain an approximate formula for $d$ if ${\rm b}\Omega$ has a UGS. To begin, we parametrize large CC balls with non-isotropic cylinders. 

\begin{thm}\label{largescaleparam}Suppose that ${\rm b}\Omega$ has uniform global structure $(f(\delta),\delta_0)$, and define $Cyl_d (p_0,\delta):={\rm Img}(\Psi_{p_0,\delta})$, where $\Psi_{p_0,\delta}:B(0,1)\times(-1,1)\rightarrow {\rm b}\Omega,$
\begin{equation*}
\Psi_{p_0,\delta}(a,b,c)=\exp(a\delta X+b\delta Y+cf(\delta)\partial_{t})(p_0).
\end{equation*}
If $\delta\geq \delta_0$, then there exists $R>r>0$ (independent of $\delta$ and $p_0$) such that \[Cyl_d (p_0,r\delta)\subset B_d(p_0,\delta)\subset Cyl_d (p_0,R\delta).\]
\end{thm}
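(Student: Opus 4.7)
The plan is to establish the two inclusions by explicit two-stage constructions of CC paths controlled by the uniform global structure, supported by a superlinear scaling estimate for $f$. The key auxiliary step I would prove first is
\[\Lambda(p,R\delta)\gtrsim R\,\Lambda(p,\delta)\qquad\text{for all } R\geq 1,\]
from which the UGS yields $f(R\delta)\gtrsim R f(\delta)$ with a constant depending only on the UGS data. This follows by taking a mean-zero loop at $p$ with budget $\delta$ whose $t$-change is arbitrarily close to $\Lambda(p,\delta)$, concatenating $\lfloor R\rfloor$ copies of it (which preserves the mean-zero condition and the pointwise bound $|\alpha|^2+|\beta|^2<1$ after time-reparametrization to budget $R\delta$), and noting that the resulting loop has $t$-change at least $\lfloor R\rfloor\Lambda(p,\delta)$. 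Monotonicity of $\Lambda(p,\cdot)$ in $\delta$ fills in the non-integer cases.

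For the inclusion $Cyl_d(p_0,r\delta)\subset B_d(p_0,\delta)$, I fix $(a,b,c)$ with $a^2+b^2<1$ and $|c|<1$ and compute the flow endpoint
\[\Psi_{p_0,r\delta}(a,b,c)=\bigl(x_0+\tfrac{ar\delta}{2},\,y_0-\tfrac{br\delta}{2},\,t_0+T_0+cf(r\delta)\bigr),\]
where $T_0$ is the integral of $\omega:=\partial_y P\,dx-\partial_x P\,dy$ along the straight $xy$-segment to the target. I then build a CC path in two stages. Stage A uses constant $(\alpha,\beta)=(a,b)$ on $[0,r]$ with speed parameter $\delta$, consuming CC length $r\delta\sqrt{a^2+b^2}<r\delta$, reaching the correct $(x,y)$ and contributing exactly $T_0$ to $t$. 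Stage B is a mean-zero loop on $[r,1]$ which, after reparametrization onto $[0,1]$, has speed $(1-r)\delta$; I select it via a continuous $\lambda$-deformation from the trivial loop (and orientation reversal for the opposite sign) so that it produces the exact residual $t$-change $cf(r\delta)$. Feasibility requires $\Lambda(\text{stage-B base},(1-r)\delta)\geq |cf(r\delta)|$, which combined with the UGS reduces to $f((1-r)\delta)\gtrsim f(r\delta)$ with a large enough constant; by the superlinear scaling, this holds once $(1-r)/r$ exceeds a fixed threshold determined by $\delta_0$ and the UGS constants, giving an absolute $r$.

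For the reverse inclusion $B_d(p_0,\delta)\subset Cyl_d(p_0,R\delta)$, I fix $q=(x,y,t)$ with $d(p_0,q)<\delta$ and set $a=2(x-x_0)/(R\delta)$, $b=2(y_0-y)/(R\delta)$, $c=(t-t_0-T_0)/f(R\delta)$, so that $\Psi_{p_0,R\delta}(a,b,c)=q$. The CC constraint forces $|q-p_0|_{xy}<\delta/2$, so $R\geq 1$ already yields $a^2+b^2<1/R^2\leq 1$; the work is in $|c|<1$. Letting $\gamma$ be a near-minimizing CC path from $p_0$ to $q$, I consider the closed $xy$-loop $\ell$ formed by $\pi\gamma$ followed by the reversed straight segment from $(x,y)$ to $(x_0,y_0)$. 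Then $\int_\ell\omega=(t-t_0)-T_0$ by the Green's-theorem identity used before Definition \ref{stockyarddef}, and $\ell$ passes through $p_0$, is FPWS, and has total $xy$-length less than $\delta$. Running the stockyard decomposition from the proof of Theorem \ref{gschar} on $\ell$ gives a $(p_0,\delta)$-stockyard whose pen-integrals dominate $\left|\int_\ell\omega\right|$, so that $|t-t_0-T_0|\leq\Lambda(p_0,\delta)\lesssim f(\delta)$. The superlinear scaling then guarantees that $f(R\delta)$ strictly exceeds this bound once $R$ is a sufficiently large fixed constant.

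The main obstacle is the superlinear scaling for $f$. The pointwise bounds $\delta\lesssim f(\delta)\lesssim\delta^2$ from Theorem \ref{ugschar} alone are not enough to compare $f(R\delta)$ with $f(\delta)$ via a constant uniform in $\delta$, and without such a comparison the parameters $r,R$ in the two inclusions would have to degrade with $\delta$ rather than be absolute constants. The concatenation argument — which preserves mean-zero and simply adds the resulting $t$-changes — supplies the scaling relation that closes both directions.
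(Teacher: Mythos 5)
Your proof is correct and follows the same overall strategy as the paper: parametrize by moving to the target $(x,y)$ with a constant $(\alpha,\beta)$ and then absorbing the residual $t$-change with a mean-zero loop for one inclusion, and for the other, compare an arbitrary admissible path to the straight segment, bound the resulting loop integral by $\Lambda(p_0,\cdot)$ via Theorem \ref{gschar}, and choose $c\in(-1,1)$ by the intermediate value theorem. The one place you genuinely add value is the explicit superlinear scaling lemma $\Lambda(p_0,R\delta)\gtrsim R\,\Lambda(p_0,\delta)$ (hence $f(R\delta)\gtrsim R f(\delta)$), which you prove by concatenating $\lfloor R\rfloor$ copies of a near-extremal mean-zero loop (equivalently, repeating pens in a stockyard). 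The paper simply asserts ``by Theorem \ref{ugschar}, there exists $R$ such that $f(R\delta)\geq Kf(\delta)$,'' but the two-sided bound $\delta\lesssim f(\delta)\lesssim\delta^2$ from Theorem \ref{ugschar} does not by itself yield a $\delta$-uniform $R$: it only gives $f(R\delta)/f(\delta)\gtrsim R/\delta$, which degrades. What does produce a uniform $R$ is exactly the repeating-pens construction that appears inside the proof of Theorem \ref{ugschar}; you correctly extracted it as a standalone lemma, which is the cleaner way to present the argument. One small caveat shared with the paper: for $Cyl_d(p_0,r\delta)$ to be defined via the UGS one implicitly needs $r\delta\geq\delta_0$, so the containments as stated really require $\delta\gtrsim\delta_0/r$ or a harmless extension of $f$ below $\delta_0$; this is a cosmetic domain issue, not a flaw in your argument.
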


\begin{proof}
Let $(\alpha,\beta)\in\mathscr{X}$ and $a=\int_0^1 \alpha(t)dt,\ b=\int_0^1 \beta(t)dt.$ 
To begin, write 
\[\Lambda_{z_0,\delta}(\alpha,\beta)-\Lambda_{z_0,\delta}(a,b)=cf(\delta)+\oint_{\pi\gamma-\pi\tilde{\gamma}} \partial_{y}Pdx-\partial_{x}Pdy,\]
where $\gamma$ and $\tilde{\gamma}$ are the curves determined by $(\alpha,\beta)$ and the constant functions $(a,b)$, respectively. 
By Theorem \ref{gschar}, $|\Lambda_{z_0,\delta}(\alpha,\beta)-\Lambda_{z_0,\delta}(a,b)|\leq Kf(\delta)$ for some universal $K>0$. 
With this bound in hand, we proceed.

Suppose that $(x,y,t)=(x_0+\frac{a\delta}{2},y_0-\frac{b\delta}{2},t_0+\Lambda_{z_0,\delta}(\alpha,\beta))$. 
We would like $(x,y,t)\in Cyl_d (p_0,R\delta)$ for some $R$ depending only on $K$. 

Consider the element of $Cyl_d (p_0,R\delta)$ given by 
\begin{equation*}
\Psi_{p_0,R\delta}\Big(\frac{a}{R},\frac{b}{R},c\Big) = \Big(x_0+\frac{a\delta}{2},y_0-\frac{b\delta}{2},t_0 + cf(R\delta)+\Lambda_{z_0,R\delta}\Big(\frac{a}{R},\frac{b}{R}\Big)\Big).
\end{equation*}
We want to prove that, if $R$ is large enough, then there exists $c\in (-1,1)$ such that 
\begin{equation} cf(R\delta)+\Lambda_{z_0,R\delta}\Big(\frac{a}{R},\frac{b}{R}\Big)=\Lambda_{z_0,\delta}(\alpha,\beta).\label{eq:UGS1}\end{equation}

But $\Lambda_{z_0,R\delta}\Big(\frac{a}{R},\frac{b}{R}\Big)=\Lambda_{z_0,\delta}(a,b)$, so $|\Lambda_{z_0,\delta}(\alpha,\beta)-\Lambda_{z_0,R\delta}\Big(\frac{a}{R},\frac{b}{R}\Big)|\leq Kf(\delta)$. By Theorem \ref{ugschar}, there exists $R$ such that $f(R\delta)\geq Kf(\delta)$, and therefore $c$ can be chosen to satisfy (\ref{eq:UGS1}). 
This proves the second containment.

For the first containment, it suffices to show that there is $R>0$ such that if $\delta\geq \delta_0$, then $Cyl_d (p_0,\delta)\subset B_d(p_0,R\delta).$ 

If $p=\Psi_{p_0,\delta}(a,b,c)\in Cyl_d(p_0,\delta)$, then 
\begin{equation*}
p=\Psi_{(z_0,t_0+cf(\delta)),\delta}(a,b,0)\in B_d((z_0,t_0+cf(\delta)),\delta).
\end{equation*}
We will show that there is a universal $R>0$ with $(z_0,t_0+cf(\delta))\in B_d(p_0,R\delta),$ which implies that $p\in B_d(p_0,(R+1)\delta)$.

Choose $(\alpha,\beta)\in\mathscr{X}^\ast$ such that $$\Lambda_{z_0,R\delta}(\alpha,\beta)\approx f(R\delta)\gtrsim Rf(\delta),$$ where the last inequality holds by Theorem \ref{ugschar}. 
Now choose $R$ so that \newline $\Lambda_{z_0,R\delta}(\alpha,\beta)\geq |cf(\delta)|$. 
By continuity there exists $1>\epsilon>-1$ such that $\Lambda_{z_0,R\delta}(\epsilon\alpha,\epsilon\beta)=cf(\delta)$, which completes the proof.
\end{proof}
\smallskip

\begin{defn}{\rm 
We say that ${\rm b}\Omega$ is \emph{uniformly of type $m$} if \[ \displaystyle \sup_{0\leq j\leq m-2}\  \sup_{j^{th}\ order\ derivatives\ W} |W\Delta P(z_0)| \approx 1.\]
}\end{defn}

The following is a consequence of the previous theorem and the results in \cite{NagelSteinWainger1985} or \cite{Street2011}.

\begin{cor}\label{globalparam}Suppose that ${\rm b}\Omega$ has a UGS and that $P$ is uniformly of type $m$. Then there exist uniform constants $0<r<R$ and diffeomorphisms $\Psi_{p_0,\delta}:B(0,1)\times(-1,1)\rightarrow\mathbb{R}^3$ with $\Psi_{p_0,\delta}(0)=p_0$ and 
\[{\rm Img}(\Psi_{p_0,r\delta})\subset B_d(p_0,\delta)\subset Img(\Psi_{p_0,R\delta}).\] 
Moreover, we have the following estimate for the Jacobian determinant of $\Psi_{p_0,\delta}${\rm:} $|J\Psi_{p_0,\delta}|\approx \delta^2\Lambda(p_0,\delta)$
\end{cor}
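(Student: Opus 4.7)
The plan is to split the parameter range at $\delta_0$, using the explicit large-scale parametrization from Theorem \ref{largescaleparam} for $\delta\geq \delta_0$ and quoting the classical ball-box theorem for $\delta<\delta_0$.

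For $\delta\geq \delta_0$ I would take $\Psi_{p_0,\delta}$ exactly as defined in Theorem \ref{largescaleparam}; the containment ${\rm Img}(\Psi_{p_0,r\delta})\subset B_d(p_0,\delta)\subset{\rm Img}(\Psi_{p_0,R\delta})$ is the conclusion of that theorem. To verify that $\Psi_{p_0,\delta}$ is a diffeomorphism and to compute the Jacobian, one integrates the defining ODE coordinatewise: because the vector field $a\delta X+b\delta Y+cf(\delta)\partial_t$ moves the first two coordinates at constant rates $a\delta/2$ and $-b\delta/2$ independent of the $t$-coordinate, the flow decouples into
\[\Psi_{p_0,\delta}(a,b,c)=\bigl(x_0+\tfrac{a\delta}{2},\ y_0-\tfrac{b\delta}{2},\ t_0+cf(\delta)+\Lambda_{z_0,\delta}(a,b)\bigr),\]
where $\Lambda_{z_0,\delta}(a,b)$ is the functional \eqref{deflambda} evaluated on the constant pair. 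The Jacobian matrix is lower-triangular in the order $(a,b,c)$ with diagonal $(\delta/2,-\delta/2,f(\delta))$, so $|J\Psi_{p_0,\delta}|=\delta^2 f(\delta)/4$, and the UGS hypothesis $\Lambda(p_0,\delta)\approx f(\delta)$ yields $|J\Psi_{p_0,\delta}|\approx \delta^2\Lambda(p_0,\delta)$. Injectivity is immediate from the triangular form.

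For $0<\delta<\delta_0$ I would invoke the Nagel-Stein-Wainger ball-box theorem from \cite{NagelSteinWainger1985} (or Street's uniform version from \cite{Street2011}), which produces diffeomorphisms with exactly the claimed containments and Jacobian estimate. The nontrivial point is that the cited theorems require Hörmander's condition uniformly in $p_0$, and this is precisely the purpose of the \emph{uniformly of type $m$} hypothesis. Indeed, every iterated bracket $[W_1,[W_2,\ldots,[W_{k-1},W_k]\cdots]]$ of $X$ and $Y$ is, by a direct computation from the formulas $X=\tfrac{1}{2}\partial_x+\tfrac{1}{2}\partial_yP\,\partial_t$, $Y=-\tfrac{1}{2}\partial_y+\tfrac{1}{2}\partial_xP\,\partial_t$, a polynomial differential operator of order at most $k-2$ in derivatives of $\Delta P$ times $\partial_t$; the assumption $\sup_{0\leq j\leq m-2}\sup_W|W\Delta P(z_0)|\approx 1$ therefore gives simultaneous upper bounds and a lower bound on the spanning bracket, uniformly in $p_0$.

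The main obstacle will be making the uniformity in $p_0$ precise in the small-scale regime; this is almost entirely bookkeeping with the cited results once the bracket computation above is written out. The large and small families are then glued at the interface $\delta\approx \delta_0$, where both satisfy comparable ball-box statements: any mismatch between the two families at $\delta=\delta_0$ is absorbed by shrinking $r$ and enlarging $R$ by a uniform factor.
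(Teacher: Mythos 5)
Your proposal is correct and follows exactly the route the paper intends: the paper's own justification for this corollary is a one-line reference to Theorem~\ref{largescaleparam} together with \cite{NagelSteinWainger1985} and \cite{Street2011}, which is precisely the large-scale/small-scale split you carry out. Your explicit lower-triangular Jacobian calculation for $\delta\geq\delta_0$ and your bracket computation (one checks $[X,Y]=\tfrac{1}{4}\Delta P\,\partial_t$, so iterated brackets are of the form $cW\Delta P\,\partial_t$ and the uniformly-of-type-$m$ hypothesis supplies the uniform H\"ormander data the cited results require) correctly fill in the details the paper leaves to the reader.
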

\smallskip

\begin{cor} If ${\rm b}\Omega$ has a UGS, then $|B_d(p_0,\delta)|\approx \delta^2 \Lambda(p_0,\delta).$\end{cor}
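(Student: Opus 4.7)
The plan is to sandwich $B_d(p_0,\delta)$ between the cylinders produced by Theorem~\ref{largescaleparam} and to evaluate the Lebesgue measure of each cylinder directly. First I would unpack the definition of $\Psi_{p_0,\delta}$: the vector field $a\delta X+b\delta Y+cf(\delta)\partial_t$ has $(x,y)$-components $(\tfrac{a\delta}{2},-\tfrac{b\delta}{2})$ that are constant in $(x,y,t)$, so its time-$1$ flow from $p_0$ integrates explicitly to
\[\Psi_{p_0,\delta}(a,b,c)=\Bigl(x_0+\tfrac{a\delta}{2},\ y_0-\tfrac{b\delta}{2},\ t_0+\Lambda_{z_0,\delta}(a,b)+cf(\delta)\Bigr),\]
where the middle coordinate matches \eqref{deflambda} applied to the constant pair $(a,b)$. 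Thus $\Psi_{p_0,\delta}$ is a diffeomorphism of $B(0,1)\times(-1,1)$ onto its image with constant Jacobian determinant $\tfrac14\delta^2 f(\delta)$, so the volume $|Cyl_d(p_0,\delta)|=\tfrac{\pi}{2}\delta^2 f(\delta)$.

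Feeding this into the inclusions $Cyl_d(p_0,r\delta)\subset B_d(p_0,\delta)\subset Cyl_d(p_0,R\delta)$ from Theorem~\ref{largescaleparam} and replacing $f$ by $\Lambda(p_0,\cdot)$ using the UGS equivalence yields
\[\tfrac{\pi r^2}{2}\delta^2\Lambda(p_0,r\delta)\ \lesssim\ |B_d(p_0,\delta)|\ \lesssim\ \tfrac{\pi R^2}{2}\delta^2\Lambda(p_0,R\delta).\]
Because $r$ and $R$ are universal constants, the powers $r^2$ and $R^2$ are harmless; what remains is to replace $\Lambda(p_0,r\delta)$ and $\Lambda(p_0,R\delta)$ by $\Lambda(p_0,\delta)$, i.e.\ to show that $\Lambda(p_0,\cdot)$ is doubling on $[\delta_0,\infty)$ with constants independent of $p_0$.

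This doubling step is where I expect the main difficulty to sit, since Theorem~\ref{ugschar} alone only gives $\delta\lesssim f(\delta)\lesssim\delta^2$, which is too loose to force $f(R\delta)\approx f(\delta)$. My plan is to argue doubling directly from the stockyard characterization: starting with a $(z_0,c\delta)$-stockyard that nearly realizes $\Lambda(p_0,c\delta)$, I would bound each pen by Lemma~\ref{necessary}(b) as $\int_{R_i}\Delta P\lesssim P(R_i)+P(R_i)^2$, then repackage the pens into a bounded number of $(z_0,\delta)$-stockyards, inserting short connecting arcs where necessary to preserve the connectedness condition through $z_0$. The matching lower bound of Lemma~\ref{necessary}(a)---which supplies a sub-pen of $\Delta P$-mass comparable to $\hat\delta+\hat\delta^2$ for some $\hat\delta\leq\delta$---absorbs the $(\delta+\delta^2)$-type terms into $\Lambda(p_0,\delta)$. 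The resulting estimate $\Lambda(p_0,c\delta)\lesssim_c\Lambda(p_0,\delta)$, together with the obvious monotonicity of $\Lambda(p_0,\cdot)$, is the doubling we need; combined with the cylinder sandwich above it gives $|B_d(p_0,\delta)|\approx\delta^2\Lambda(p_0,\delta)$.
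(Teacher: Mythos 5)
Your sandwich-plus-Jacobian plan is exactly the route the paper takes: the corollary is stated right after Corollary~\ref{globalparam} and is meant to follow from the cylinder inclusions of Theorem~\ref{largescaleparam} together with the constant Jacobian $|J\Psi_{p_0,\delta}|=\tfrac14\delta^2 f(\delta)$, which you compute correctly (so $|Cyl_d(p_0,\delta)|=\tfrac{\pi}{2}\delta^2 f(\delta)$). You also correctly notice what the paper leaves implicit: this only yields $(r\delta)^2 f(r\delta)\lesssim |B_d(p_0,\delta)|\lesssim (R\delta)^2 f(R\delta)$, so one still needs a doubling estimate $f(R\delta)\approx f(\delta)$ to land on $\delta^2\Lambda(p_0,\delta)$. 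The paper treats this as already available, by invoking \cite{NagelSteinWainger1985} and \cite{Street2011} for the standard doubling properties of CC balls/volumes; your attempt to prove it from scratch is thus doing more work than the paper asks for, which is fine in spirit, but your sketch has problems.

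The concrete gaps in your doubling step are these. First, insisting that the repackaged pieces be \emph{$(z_0,\delta)$-stockyards} (all anchored at $z_0$) is the wrong move: a pen far along the chain may be distance close to $c\delta$ from $z_0$, and reconnecting it to $z_0$ can cost $\Theta(\delta)$ of fencing per piece, so the total fencing is not controlled. The natural fix is to let each sub-stockyard be based at a point $z_i$ on its own connected boundary and then use the \emph{uniformity} of the UGS, $\Lambda(p_{z_i},\delta)\approx f(\delta)$ for every $z_i$, to compare back to $f(\delta)$. Second, the bound you extract from Lemma~\ref{necessary}(b), $\sum_i\int_{R_i}\Delta P\lesssim\sum_i\bigl(P(R_i)+P(R_i)^2\bigr)\lesssim \delta+\delta^2$, is too weak: Theorem~\ref{ugschar} allows $f(\delta)\approx\delta$, in which case $\delta^2$ is not $\lesssim f(\delta)$. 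Third, the proposed absorption via Lemma~\ref{necessary}(a) does not work as stated, because that lemma only produces some $\hat\delta\le\delta$ with $\int_{B(z,\hat\delta)}\Delta P\gtrsim\hat\delta+\hat\delta^2$; nothing forces $\hat\delta\approx\delta$, so $\hat\delta+\hat\delta^2$ need not dominate $\delta^2$. If you want a self-contained doubling argument, the workable ingredients are: the easy superadditivity $\Lambda(p_0,n\delta)\ge n\Lambda(p_0,\delta)$ (concatenate stockyards through $z_0$) for the lower direction, and, for the upper direction, covering each pen of a $(z_0,K\delta)$-stockyard by a universally bounded number of discs of perimeter $\le\delta$ together with a careful splitting of the chain into $O_K(1)$ sub-chains each of perimeter $\le\delta$; one must take care with pens of perimeter $>\delta$ and with branching of the boundary graph. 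As written, your sketch does not close these gaps.
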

\smallskip

For $p_0=(z_0,t_0)$ and $p_1=(z_1,t_1)$, define 
\begin{eqnarray*}
T(p_0,p_1)&:=&\begin{cases} 2\Im\left( \int_0^1 (z_0-z_1)\partial_{z}P(z_0+(z_1-z_0)s)ds\right) ,& d(p_0,p_1)\geq \delta_0, \\
 2\Im\left(\sum_{k=1}^m \partial_{z}^kP(z_1) \frac{(z_0-z_1)^k}{k!}\right),& d(p_0,p_1)\leq \delta_0.\end{cases}
\end{eqnarray*}
 We now get the following extremely useful result.
\smallskip

\begin{thm}\label{globalmetric} Suppose that $P$ satisfies the assumptions of Corollary \ref{globalparam}, and fix $p_0,p_1\in {\rm b}\Omega$. 
Then \[d(p_0,p_1)\approx |z_1-z_0| + \mu(p_0,|t_1-t_0-T(p_0,p_1)|),\] where $\mu(p_0,\delta)$ is the inverse function to $\delta\mapsto \Lambda(p_0,\delta).$
If, in addition,\newline $\|\nabla^2 P\|_\infty<\infty,$ then we have $d(p_0,p_1)\approx |z_0-z_1|+\sqrt{|t_0-t_1|}$ for $\delta\geq \delta_0$.
\end{thm}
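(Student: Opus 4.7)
The plan is to establish the approximate formula for $d$ via the cylinder parametrization from Corollary \ref{globalparam}, combined with a key identity showing that $\Lambda_{z_0,\delta}(a,b) = T(p_0,p_1)$ whenever the constant controls $(a,b)$ steer the straight-line $xy$-path from $z_0$ to $z_1$. To verify this identity I would expand (\ref{deflambda}) with $(\alpha,\beta)=(a,b)$ constant: the integrand becomes $\frac{\delta}{2}[a\partial_y P + b\partial_x P]$ along $z(s) = z_0+s(z_1-z_0)$, and rewriting $\partial_x,\partial_y$ in terms of $\partial_z = \tfrac{1}{2}(\partial_x-i\partial_y)$ recovers $2\Im(\int_0^1(z_0-z_1)\partial_z P(z(s))\,ds) = T(p_0,p_1)$ under the normalization $z_1-z_0 = (\delta/2)(a-ib)$.

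Next, for the lower bound when $d(p_0,p_1) \geq \delta_0$: if $d(p_0,p_1) \leq \delta$, Corollary \ref{globalparam} writes $p_1 = \Psi_{p_0,R\delta}(a,b,c)$ for some $(a,b,c) \in B(0,1)\times(-1,1)$, giving $|z_0-z_1| \lesssim \delta$ directly and, via the key identity, $|t_1-t_0-T(p_0,p_1)| = |c|f(R\delta) \leq f(R\delta) \approx \Lambda(p_0,R\delta)$, so $\mu(p_0,|t_1-t_0-T(p_0,p_1)|) \lesssim \delta$. For the upper bound, I would set $\delta_1 := |z_0-z_1| + \mu(p_0,|t_1-t_0-T(p_0,p_1)|)$ and work at scale $\delta \approx \delta_1$ chosen large enough that $\delta \geq 2|z_0-z_1|$ and $f(\delta) \gtrsim |t_1-t_0-T(p_0,p_1)|$. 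The constants $(a,b)$ determined by $z_1-z_0 = (\delta/2)(a-ib)$ then satisfy $a^2+b^2 < 1$; the key identity supplies $\Lambda_{z_0,\delta}(a,b) = T(p_0,p_1)$, so a suitable $c \in (-1,1)$ absorbs the residual $t$-discrepancy, placing $p_1 \in {\rm Img}(\Psi_{p_0,\delta})$ and yielding $d(p_0,p_1) \lesssim \delta_1$ by the first containment of Corollary \ref{globalparam}.

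The small-scale regime $d(p_0,p_1) < \delta_0$ would be handled by the classical ball-box theorem of \cite{NagelSteinWainger1985} or \cite{Street2011}, with $T$ taken in its Taylor-polynomial form playing the analogous role to $\Lambda_{z_0,\delta}(a,b)$ above. For the simplified formula under $\|\nabla^2 P\|_\infty < \infty$, Theorem \ref{approxquad} forces $\mu(p_0,\delta) \approx \sqrt{\delta}$, so the general formula collapses to $d(p_0,p_1) \approx |z_0-z_1| + \sqrt{|t_1-t_0-T(p_0,p_1)|}$; the Lipschitz control on $\partial_z P$ then bounds the error term $T(p_0,p_1)$, and a case split replaces $|t_1-t_0-T(p_0,p_1)|$ by $|t_1-t_0|$ up to quantities absorbed by the $|z_0-z_1|$ contribution. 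The main obstacle is the careful bookkeeping in the key identity—tracking factor-of-two conventions between the real vector-field formulation and the complex $\partial_z$ notation—together with cleanly stitching the large- and small-scale regimes, since both the parametrization and the definition of $T$ change across the threshold $\delta_0$.
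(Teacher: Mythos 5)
Your argument for the main estimate $d(p_0,p_1)\approx |z_1-z_0|+\mu(p_0,|t_1-t_0-T(p_0,p_1)|)$ is essentially the paper's: both rely on the cylinder parametrization of Theorem \ref{largescaleparam}/Corollary \ref{globalparam} and on the observation (stated in the paper as ``$t_1=t_0+cf(\delta)+T(p_0,p_1)$'' and made explicit by you as the key identity $\Lambda_{z_0,\delta}(a,b)=T(p_0,p_1)$ for constant controls) that the $t$-displacement along the straight-line projection equals $T$. You split the claim into separate lower and upper bounds; the paper works at a single scale $\delta=(1+\epsilon)d(p_0,p_1)$ and reads off both directions from $|((a+ib),c)|\approx 1$. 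These are interchangeable reorganizations of the same computation, and the small-scale regime is handled identically via \cite{NagelSteinWainger1985}.

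There is, however, a genuine gap in your treatment of the final statement. You assert that ``the Lipschitz control on $\partial_z P$ then bounds the error term $T(p_0,p_1)$,'' but Lipschitz control of $\partial_z P$ (i.e.\ $\|\nabla^2 P\|_\infty<\infty$) only controls the \emph{increment} $\partial_z P(z_0+(z_1-z_0)s)-\partial_z P(z_0)$. It says nothing about $\partial_z P(z_0)$ itself, which is unbounded in general. Expanding, $T(p_0,p_1)=2\Im\bigl((z_0-z_1)\partial_z P(z_0)\bigr)+O(|z_0-z_1|^2)$, and the first term can dominate $|z_0-z_1|^2$ by an arbitrarily large factor (e.g.\ $P(z)=|z|^2+\lambda\,\Im z$ with $\lambda$ large). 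Hence $\sqrt{|t_1-t_0-T|}$ is \emph{not} comparable to $\sqrt{|t_1-t_0|}$ modulo an $|z_0-z_1|$ contribution without further preparation. The paper's proof resolves this by first invoking the fact that a linear biholomorphism of $\mathbb{C}^2$ carries the model domain to another model domain (replacing $P$ by $P$ plus a real affine/harmonic term) while preserving the induced Carnot–Carath\'eodory metric; one can therefore normalize so that $P(p_0)=0$ and $\nabla P(p_0)=0$, after which the linear term in $T$ vanishes and the mean value theorem gives $|T(p_0,p_1)|\lesssim\|\nabla^2 P\|_\infty|z_0-z_1|^2$, which is exactly what your ``case split'' needs. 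Your proposal should include this normalization step explicitly, together with the verification that it does not alter $d$.
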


\begin{proof}
Let $\delta=(1+\epsilon)d(p_0,p_1)$, for some small positive $\epsilon$.
For $\delta\lesssim \delta_0$, the result can be found in \cite{NagelSteinWainger1985}. 
If $\delta\gg \delta_0$, we write \begin{equation}\label{eq:expmapest} p_1=\exp(a\delta X+b\delta Y + cf(\delta)\partial_{t})(p_0),\qquad (a,b,c)\in B(0,1)\times(-1,1).\end{equation} 
Because $\delta\approx d(p_0,p_1)$, we have $|((a+ib),c)|\approx 1$. 
From (\ref{eq:expmapest}) we obtain $a=2\delta^{-1}(x_1-x_0),\ b=2\delta^{-1}(y_0-y_1)$.~
Solving (\ref{eq:expmapest}) we obtain $t_1=t_0+cf(\delta)+T(p_0,p_1),$
and therefore $|t_1-t_0-T(p_0,p_1)|=|c|f(\delta),\ \ |z_0-z_1|\approx |a+ib|\delta.$

The bounds on $((a+ib),c)$ imply that $\delta\gtrsim |z_0-z_1|+\mu(p_0,|t_1-t_0-T(p_0,p_1)|).$
On the other hand, $|a+ib|+|c|\approx 1$, so that $$\delta\lesssim |z_0-z_1|\qquad\mbox{or}\qquad \delta\lesssim \mu(p_0,|t_1-t_0-T(p_0,p_1)|),$$ and therefore $d(z_0,z_1)\approx |z_0-z_1|+\mu(p_0,|t_1-t_0-T(p_0,p_1)|).$

For the final statement, we remark that if $\Phi:\mathbb{C}^2\rightarrow \mathbb{C}^2$ is a linear biholomorphism, $\tilde{\Omega}=\Phi(\Omega)$, and $\tilde{d}$ is the CC metric on ${\rm b}\tilde{\Omega}$ induced by the real and imaginary parts of the Cauchy-Riemann vector field, then $\tilde d(\Phi(p_0),\Phi(p_1))=d(p_0,p_1)$.
We may therefore assume that $P(p_0)=0$ and $|\nabla P(p_0)|=0$.

One can then use the mean value theorem on $T(p_0,p_1)$, together with the condition that $\|\nabla^2 P\|_\infty < \infty$ and Theorem \ref{approxquad}, to obtain the result.
\end{proof}
\smallskip

\begin{rem}{\rm The condition that $\|\nabla^2 P\|_\infty<\infty$ may seem rather severe, but it holds if, for instance, $\Omega$ is convex and $\|\Delta P\|_\infty<\infty$.}\end{rem}

\section{Conclusion}\label{sec:conclusion}
The results of this paper have trivial extensions to decoupled domains (see \cite{McNeal1991}). The application of the methods developed here to more general types of domains will be the subject of future investigations.

This paper represents a crucial first step in the study of \emph{approximately quadratic domains}, or uniformly finite type domains $\Omega=\left\{ \Im(z_2)>P(z_1)\right\}\subset \mathbb{C}^2$ which possess uniform global structures.
The results of this paper, particularly Theorems \ref{approxquad} and \ref{globalmetric}, are fundamental for the study of the more complicated analytical objects associated to these domains, which will be the focus of future work.

\bigskip

\noindent Aaron Peterson\\
Department of Mathematics\\
University of Wisconsin - Madison\\
480 Lincoln Dr.\\
Madison, WI 53706, USA\\
e-mail: apeterso@math.wisc.edu
\end{document}